\newtheorem{theorem}{Theorem}[section]
\newtheorem{definition}[theorem]{Definition}
\newtheorem{remark}[theorem]{Remark}
\newtheorem{example}[theorem]{Example}
\newcommand{\tr}{{\rm Tr\hskip -0.2em}~}
\begin{document}

\title{Non-commutative perspectives}
\author{Edward Effros and Frank Hansen}
\date{September 30, 2013}

\maketitle

\begin{abstract}

We prove that the non-commutative perspective of an operator convex function is the unique extension of the corresponding commutative perspective that preserves homogeneity and convexity. 

\end{abstract}

\section{Introduction  and preliminaries}

Let $ f $ be a function defined in the positive (open) half-line. The perspective function $ \mathcal P_f $ is the function of two variables given by
\[
P_f(t,s)=s f(t s^{-1})\qquad t,s>0.
\]
Depending on the application, we may also consider the function $ (t,s)\to\mathcal P_f(s,t) $ and denote this as the perspective of $ f. $ 

If $ A $ and $ B $ are commuting positive definite matrices then the matrix $ \mathcal P_f(A,B) $ is well-defined by the functional calculus.
Even if $ A $ and $ B $ do not commute we may by choosing an appropriate ordering define the perspective by setting
\[
\mathcal P_f(A,B)=B^{1/2}f(B^{-1/2}AB^{-1/2})B^{1/2}.
\]
This expression is well-defined and coincides with $ P_f(A,B), $ when $ A $ and $ B $ commute.

Effros  \cite{kn:effros:2009:1} only considered the case where each pair in the argument of the perspective function consists of commuting operators and proved in this way that the perspective of an operator convex function is operator convex as a functions of two variables.
Ebadian et. al. \cite{kn:ebadian:2011} noticed that virtually the same proof applies without any commutativity conditions. We include the proof for the convenience of the reader.

\begin{theorem}
Let $ f  $ be an operator convex function defined in the positive half-line.
The mapping
\[
(A,B)\to \mathcal P_f(A,B),
\]
defined in pairs of positive definite matrices, is convex.
\end{theorem}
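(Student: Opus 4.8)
The plan is to establish the convexity inequality by a direct computation that reduces the two-variable non-commutative statement to the Jensen operator inequality for the operator convex function $f$. Fix positive definite matrices $A_1,A_2,B_1,B_2$ and a weight $\lambda\in(0,1)$, and put $A=\lambda A_1+(1-\lambda)A_2$ and $B=\lambda B_1+(1-\lambda)B_2$; both are positive definite, so $\mathcal P_f(A,B)=B^{1/2}f(B^{-1/2}AB^{-1/2})B^{1/2}$ makes sense, and likewise for each pair $(A_i,B_i)$. First I would introduce
\[
C_i=B_i^{1/2}B^{-1/2}\qquad(i=1,2),
\]
and record the two bookkeeping identities
\[
\lambda\,C_1^{*}C_1+(1-\lambda)\,C_2^{*}C_2=B^{-1/2}\bigl(\lambda B_1+(1-\lambda)B_2\bigr)B^{-1/2}=I,\qquad B^{1/2}C_i^{*}=B_i^{1/2},
\]
both of which use only that $B^{\pm1/2}$ are self-adjoint. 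Setting $D_1=\sqrt{\lambda}\,C_1$ and $D_2=\sqrt{1-\lambda}\,C_2$, the first identity becomes $D_1^{*}D_1+D_2^{*}D_2=I$.

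Next, writing $T_i=B_i^{-1/2}A_iB_i^{-1/2}>0$, I would verify the key identity
\[
B^{-1/2}AB^{-1/2}=\lambda\,C_1^{*}T_1C_1+(1-\lambda)\,C_2^{*}T_2C_2=D_1^{*}T_1D_1+D_2^{*}T_2D_2,
\]
which follows by expanding $B^{-1/2}A_iB^{-1/2}=C_i^{*}T_iC_i$ and taking the convex combination. Since $f$ is operator convex on the positive half-line and $D_1^{*}D_1+D_2^{*}D_2=I$, the Jensen operator inequality of Hansen and Pedersen applies to the right-hand side and gives
\[
f\bigl(B^{-1/2}AB^{-1/2}\bigr)\le D_1^{*}f(T_1)D_1+D_2^{*}f(T_2)D_2.
\]
Conjugating by $B^{1/2}$ and using $B^{1/2}D_i^{*}=\sqrt{\lambda_i}\,B_i^{1/2}$ (with $\lambda_1=\lambda$, $\lambda_2=1-\lambda$) together with the defining formula for the perspective, the left side becomes $\mathcal P_f(A,B)$ and the right side becomes $\lambda\,\mathcal P_f(A_1,B_1)+(1-\lambda)\,\mathcal P_f(A_2,B_2)$, which is the asserted convexity.

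The substantive point, and the only place I expect any difficulty, is the appeal to the Jensen operator inequality: it is exactly the substitution $C_i=B_i^{1/2}B^{-1/2}$ that converts a non-commutative statement in two positive variables into a single-variable statement about $f$ evaluated at $D_1^{*}T_1D_1+D_2^{*}T_2D_2$, after which operator convexity does all the work. Everything else is algebraic bookkeeping with square roots, and the particular ordering chosen in the definition of $\mathcal P_f$ is what makes these identities come out cleanly. If one preferred a self-contained argument in place of citing Hansen--Pedersen, one could instead invoke the integral representation of operator convex functions on $(0,\infty)$ and check joint convexity term by term, but that route is computationally heavier and obscures the structure.
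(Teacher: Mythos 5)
Your argument is correct and is essentially identical to the paper's proof: your contractions $D_1=\sqrt{\lambda}\,B_1^{1/2}B^{-1/2}$ and $D_2=\sqrt{1-\lambda}\,B_2^{1/2}B^{-1/2}$ are precisely the operators $X,Y$ used there, and both proofs reduce the claim to the Hansen--Pedersen Jensen operator inequality applied to $D_1^{*}T_1D_1+D_2^{*}T_2D_2$ followed by conjugation with $B^{1/2}$. No further comment is needed.
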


\begin{proof} Consider positive definite matrices $ A_1,A_2 $ and $ B_1,B_2 $ and take a real number $ \lambda\in[0,1]. $ We set
\[
A=\lambda A_1+(1-\lambda) A_2\qquad\text{and}\qquad B=\lambda B_1+(1-\lambda) B_2.
\]
The matrices
\[
X=(\lambda B_1)^{1/2}B^{-1/2}\qquad\text{and}\qquad Y=((1-\lambda)B_2)^{1/2}B^{-1/2}
\]
satisfy
\[
X^*X+Y^*Y=B^{-1/2}\lambda B_1B^{-1/2}+B^{-1/2}(1-\lambda)B_2B^{-1/2}=1
\]
and
\[
\begin{array}{l}
X^* B_1^{-1/2}A_1B_1^{-1/2} X+Y^* B_2^{-1/2}A_2 B_2^{-1/2}Y\\[1ex]
=B^{-1/2} \lambda A_1B^{-1/2} +B^{-1/2} (1-\lambda)A_2B^{-1/2}=
B^{-1/2}  A B^{-1/2}.
\end{array} 
\]
Hence
\[
\begin{array}{l}
\mathcal P_f(\lambda A_1+(1-\lambda)A_2, \lambda B_1+(1-\lambda) B_2)=\mathcal P_f(A,B)\\[1.5ex]
=B^{1/2}f(B^{-1/2}AB^{-1/2})B^{1/2}\\[1.5ex]
=B^{1/2} f\bigl(X^* B_1^{-1/2}A_1B_1^{-1/2} X+Y^* B_2^{-1/2}A_2 B_2^{-1/2}Y\bigr) B^{1/2}\\[1.5ex]
\le B^{1/2}\big( X^* f(B_1^{-1/2}A_1B_1^{-1/2})X+Y^*f(B_2^{-1/2}A_2B_2^{-1/2})Y\bigr) B^{1/2}\\[1.5ex]
=(\lambda B_1)^{1/2} f(B_1^{-1/2}A_1B_1^{-1/2})(\lambda B_1)^{1/2}\\[1ex]
\hskip 8em +\,((1-\lambda)B_2)^{1/2}f(B_2^{-1/2}A_2B_2^{-1/2})((1-\lambda)B_2)^{1/2}\\[1ex]
=\lambda\mathcal P_ f(A_1,B_1) + (1-\lambda) \mathcal P_f(A_2,B_2),
\end{array}
\]
where we used Jensen's operator inequality \cite{kn:hansen:1982}.
\end{proof}

There are obvious similarities between the notion of a perspective function and the operator means studied by Kubo and Ando \cite{kn:kubo:1980}. The crucial difference is that the representing function of an operator mean is operator monotone (and hence operator convex). We are considering operator convex functions, and they are in general not monotone.

\begin{example}
Consider the operator convex function $ f(t)=-\log t $ defined for $ t>0. $ The (classical) perspective function is given by
\[
P_f(t,s)=-t\log(t^{-1}s)=t\log t-t\log s,
\]
and the non-commutative perspective is then given by
\[
\mathcal P_f(A,B)=-A^{1/2}\log(A^{-1/2}BA^{-1/2})A^{1/2}. 
\]
The relative entropy $ S(A,B) $ is defined by setting
\[
S(A,B)=\tr A\log A-\tr A\log B
\]
and is known to be a convex function of two variables.  For commuting matrices we have
\[
S(A,B)=\tr\mathcal P_f(A,B),
\]
although the two quantities in general are different.  
\end{example}

\subsection{Spectral functions}

Let $ B(\mathcal H) $ denote the set of bounded linear operators on a Hilbert space $ \mathcal H. $ A function $ F\colon\mathcal D\to B(\mathcal H) $ defined in a convex domain $ \mathcal D $ of normal operators in $ B(\mathcal H) $ is called a spectral function, if  it can be written on the form $ F(x)=f(x) $ for some real or complex function $ f $ defined in a real interval $ I, $ where $ f(x) $ is obtained by applying the functional calculus for normal operators.

Although this definition appears quite intuitive it contains some hidden assumptions. Firstly, the domain $ \mathcal D $ should be invariant under unitary transformations and
\begin{equation}\label{unitary invariance}
F(u^*xu)=u^*F(x)u\qquad x\in\mathcal D
\end{equation}
for every unitary transformation $ u $ on $ \mathcal H. $ Secondly, for orthogonal projections $ p $ and $ q $ on $ \mathcal H, $ the element $ pxp+qxq\in\mathcal D $ 
and
\begin{equation}\label{rule for block matrices}
F(pxp+qxq)=pF(pxp)p+qF(qxq)q
\end{equation}
for arbitrary $ x\in B(\mathcal H) $ such that $ pxp $ and $ qxq $ are contained in $ \mathcal D. $ An operator function $ x\to F(x) $ is a spectral function if and only if (\ref{unitary invariance}) and (\ref{rule for block matrices}) are satisfied, cf. \cite{kn:davis:1957, kn:hansen:2003:2}.

\section{The main result}

The notion of spectral function is not immediately extendable to functions of two variables. However, we may consider the two properties of spectral functions noticed by C. Davis as a kind of regularity conditions, and they are readily extendable to functions of more than one variable. 
\begin{definition}
Let $ F\colon\mathcal D\to B(\mathcal H) $ be a function of two variables defined in a convex domain $ \mathcal D\subseteq B(\mathcal H)\times B(\mathcal H). $
We say that $ F $ is regular if

\begin{enumerate}[(i)]

\item The domain $ \mathcal D $ is invariant under unitary transformations of $ \mathcal H $ and
\[
F(u^*xu, u^*yu)=u^* F(x,y) u\qquad (x,y)\in\mathcal D
\]
for every unitary $ u $ on $ \mathcal H. $

\item Let $ p $ and $ q $ be orthogonal projections on $\mathcal H. $ Then the pair of diagonal block matrices $ (pxp+qxq, pyp+qyq)\in\mathcal D $ and 
\[
F(pxp+qxq, pyp+qyq)=pF(pxp,pyp)p+qF(qxq,qyq)q
\]
for arbitrary $ x,y\in B(\mathcal H) $ such that $ (pxp,pyp) $ and $ (qxq,qyq) $ are in $ \mathcal D. $
\end{enumerate}                
\end{definition}
The following theorem is related to \cite[Theorem 2.2]{kn:hansen:1983}.

\begin{theorem}
Let $ (A,B)\to F(A,B) $ be a regular map from pairs of bounded positive semi-definite operators on an infinite dimensional Hilbert space $ \mathcal H $ into $ B(\mathcal H) $ satisfying the conditions:

\begin{enumerate}[(i)]

\item $ F(tA,tB)=t F(A,B) \qquad t>0 $

\item $\displaystyle F\left(\frac{A_1+A_2}{2},\frac{B_1+B_2}{2}\right)\le\frac{F(A_1,B_1)+F(A_2,B_2)}{2} $

\item $ F(0,0)=0, $ and $ B\to F(1,B) $ is continuous on bounded subsets in the strong operator topology, where $ 1 $ denotes the unit operator on $ \mathcal H. $

\end{enumerate}
Then there exists an operator convex function  $ f\colon\mathbf R_+\to\mathbf R $ such that
\[
F(1,t\cdot 1)=f(t) 1\qquad t>0.
\]
Furthermore,
\[
F(A,B)=A^{1/2} f(A^{-1/2}BA^{-1/2}) A^{1/2} =\mathcal P_f(A,B)
\]
for positive definite invertible operators $ A $ and $ B. $
\end{theorem}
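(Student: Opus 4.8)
The plan is to first extract the scalar function $f$ and its operator convexity, then bootstrap from the scalar identity $F(1,t\cdot 1)=f(t)1$ to the full perspective formula using regularity and homogeneity. Define $f\colon\mathbf R_+\to\mathbf R$ by the relation $F(1,t\cdot 1)=f(t)1$; this is consistent because condition (i) (regularity) forces $F(1,t\cdot 1)$ to commute with every unitary fixing $1$, hence $F(1,t\cdot 1)$ is a scalar multiple of $1$, and the scalar depends only on $t$. Operator convexity of $f$ should follow by applying condition (ii) of the theorem to pairs $(1, s\cdot 1)$ and $(1, r\cdot 1)$ together with the block-matrix rule: more precisely, for positive definite $T$ with spectral decomposition one writes $t\cdot 1$ as a diagonal block matrix in a basis diagonalizing $T$, uses regularity (ii) of the definition to reduce $F(1,T)$ to $f(T)$, and then the midpoint inequality (ii) of the theorem upgrades to the operator Jensen inequality for $f$ along the lines of \cite{kn:hansen:1982}; continuity from (iii) gives midpoint-convex $\Rightarrow$ convex in the operator sense. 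I expect this is the step requiring the most care.

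Next I would establish $F(1,B)=f(B)$ for all positive \emph{definite} $B$. For $B$ with finite spectrum this is immediate from the block-diagonal rule (ii) of the regularity definition applied in an eigenbasis of $B$, combined with the scalar identity on each block. For general bounded positive definite $B$, approximate $B$ in the strong operator topology by finite-spectrum positive operators bounded above and below by the same constants, and invoke the continuity hypothesis (iii) to pass to the limit; one must check $f$ itself is continuous on the relevant interval, which follows since operator convex functions on an open interval are continuous (indeed analytic), and the approximating spectra stay in a fixed compact subinterval of $(0,\infty)$.

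Finally I would deduce the perspective formula for arbitrary positive definite invertible $A,B$. Write $B = A^{1/2}(A^{-1/2}BA^{-1/2})A^{1/2}$ and set $T=A^{-1/2}BA^{-1/2}$, which is positive definite. By homogeneity (i) of the theorem, $F(A,B)=F(A, A^{1/2}TA^{1/2})$; the idea is to ``conjugate out'' the $A^{1/2}$. This requires a congruence-covariance property $F(S^*AS, S^*BS)=S^*F(A,B)S$ for invertible $S$, which is not among the hypotheses directly but should be derivable from unitary invariance plus homogeneity plus the block rule by a polar-decomposition and approximation argument (the standard trick: $S^*xS$ with $S$ positive can be obtained as a strong limit of expressions built from unitaries and scalar dilations, using that the conditions are preserved under such operations). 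Granting this, $F(A,B)=A^{1/2}F(1,T)A^{1/2}=A^{1/2}f(T)A^{1/2}=A^{1/2}f(A^{-1/2}BA^{-1/2})A^{1/2}=\mathcal P_f(A,B)$. The main obstacle I anticipate is precisely justifying this congruence-covariance from the stated axioms — establishing that midpoint convexity plus homogeneity plus the two regularity conditions suffice to force the transformation rule under non-unitary invertible congruence, likely via approximating $\log S$ and using continuity hypothesis (iii), which is why the theorem insists on the strong-operator continuity and the infinite-dimensionality of $\mathcal H$.
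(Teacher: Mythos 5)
Your skeleton — extract $f$ from $F(1,t\cdot 1)$ via unitary invariance, use the block-diagonal regularity rule to get $F(1,B)=f(B)$ for finite-spectrum $B$, pass to general positive definite $B$ by strong continuity of the functional calculus and hypothesis (iii), and finally reduce $F(A,B)$ to $A^{1/2}F(1,A^{-1/2}BA^{-1/2})A^{1/2}$ — is exactly the paper's skeleton. But the step you flag as the main obstacle, the congruence covariance $S^*F(A,B)S=F(S^*AS,S^*BS)$ for invertible $S$, is a genuine gap as you propose to handle it, and your suggested mechanism would not work. Writing $S^*xS$ as a limit of ``expressions built from unitaries and scalar dilations'' (or approximating $\log S$) cannot succeed on the strength of unitary invariance, homogeneity and continuity alone: a non-unitary congruence is not an average or limit of unitary conjugations and scalings in any sense that transports the hypotheses, and nothing in that sketch uses the convexity assumption (ii) or the normalization $F(0,0)=0$, both of which must enter — they are precisely what rules out non-perspective candidates.

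The paper's actual argument at this point is the Hansen--Pedersen dilation trick: for a contraction $C$ form the two unitaries $U=\begin{pmatrix} C & D\\ E & -C^*\end{pmatrix}$, $V=\begin{pmatrix} C & -D\\ E & C^*\end{pmatrix}$ with $D=(1-CC^*)^{1/2}$, $E=(1-C^*C)^{1/2}$, note that $\tfrac12 U^*\bigl(\begin{smallmatrix} X & 0\\ 0 & 0\end{smallmatrix}\bigr)U+\tfrac12 V^*\bigl(\begin{smallmatrix} X & 0\\ 0 & 0\end{smallmatrix}\bigr)V=\bigl(\begin{smallmatrix} C^*XC & 0\\ 0 & DXD\end{smallmatrix}\bigr)$, and then combine unitary invariance, $F(0,0)=0$, midpoint convexity, and the block-diagonal rule to obtain the transformer \emph{inequality} $C^*F(A,B)C\ge F(C^*AC,C^*BC)$ for contractions; homogeneity (i) removes the norm restriction, and applying the inequality to both $C$ and $C^{-1}$ forces \emph{equality} for invertible $C$, which with $C=A^{-1/2}$ gives the reduction you want. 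So the missing idea in your proposal is not an approximation lemma but this convexity-driven dilation argument (the same device you correctly invoke, but only for the operator convexity of $f$); note also that equality, not just the inequality, requires the two-sided invertibility step. The rest of your outline (finite-spectrum reduction, continuity of operator convex functions, passage from midpoint to full convexity using (iii)) is sound and matches the paper, which even observes in a footnote that continuity is not needed for the midpoint-to-full upgrade.
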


\begin{proof}
The regularity of $ F $ entails that 
\[
u^*F(1,t\cdot 1)u=F(1,t\cdot 1)\qquad t>0
\]
for every unitary $ u $ in $ B(\mathcal H). $  Thus $ F(1,t\cdot 1) $ commutes with every unitary in $ B(\mathcal H) $ and is therefore of the form
\[
 F(1,t\cdot 1)=f(t)\cdot 1\qquad t>0
\]
for some function $ f\colon\mathbf R_+\to\mathbf R. $ If $ A=\sum_{i=1}^n \lambda_i P_i $ is the spectral decomposition of a finite rank positive definite operator $ A $ on $ \mathcal H $ then
\begin{equation}\label{formula for finite rank operators}
\begin{array}{rl}
F(1,A)&=\displaystyle\sum_{i=1}^n P_i F(P_i,\lambda_i P_i)P_i\\[3ex]
&=\displaystyle\sum_{i=1}^n P_i F(1,\lambda_i\cdot 1)P_i\\[3ex]
&=\displaystyle\sum_{i=1}^n f(\lambda_i)P_i=f(A)
\end{array}
\end{equation}
by the regularity of $ F. $ Since $ F $ is mid-point convex it follows that $ f $ is mid-point operator convex and thus operator convex\footnote{It is a curiosity that continuity is not required to prove that mid-point operator convexity implies operator convexity.}.

Let now $ C $ be a contraction and consider the unitary block matrices
\[
U=\displaystyle\begin{pmatrix}
      C & (1-CC^*)^{1/2}\\
      (1-C^*C)^{1/2} & -C^*
      \end{pmatrix}
=\begin{pmatrix}
  C & D\\
  E & -C^*
  \end{pmatrix}
\]
and
\[
  V=\displaystyle\begin{pmatrix}
  C & -D\\
  E & C^*
  \end{pmatrix}.
\]      
It is plain to calculate that
\[
\frac{1}{2}U^*\begin{pmatrix}
                         A & 0\\
                         0  & 0
                         \end{pmatrix}U
+\frac{1}{2}V^*\begin{pmatrix}
                         A & 0\\
                         0 & 0
                         \end{pmatrix}V
 =\begin{pmatrix}
                C^*AC & 0\\
                0           & DAD
                \end{pmatrix} .
 \]     
 We then obtain
 \[
 \begin{array}{l}
 \displaystyle\begin{pmatrix}
 C^* F(A,B) C & 0\\
 0                      & D F(A,B) D
 \end{pmatrix}\\[3ex]
 =\displaystyle\frac{1}{2} U^*\begin{pmatrix}
                                                    F(A,B) & 0\\
                                                    0           & 0
                                                    \end{pmatrix}U
 +\frac{1}{2} V^*\begin{pmatrix}
                                                    F(A,B) & 0\\
                                                    0           & 0
                                                    \end{pmatrix}V\\[3ex]
=\displaystyle\frac{1}{2} U^* F\left(\begin{pmatrix}
                                                    A & 0\\
                                                    0 & 0
                                                    \end{pmatrix}, \begin{pmatrix}
                                                    B & 0\\
                                                    0 & 0
                                                    \end{pmatrix}\right)U
 +\frac{1}{2}VF\left(\begin{pmatrix}
                                                    A & 0\\
                                                    0 & 0
                                                    \end{pmatrix}, \begin{pmatrix}
                                                    B & 0\\
                                                    0 & 0
                                                    \end{pmatrix}\right)V^*\\[3ex]
=\displaystyle\frac{1}{2} F\left(U^*\begin{pmatrix}
                                                    A & 0\\
                                                    0 & 0
                                                    \end{pmatrix}U, U^*\begin{pmatrix}
                                                    B & 0\\
                                                    0 & 0
                                                    \end{pmatrix}U\right)
 +\frac{1}{2} F\left(V^*\begin{pmatrix}
                                                    A & 0\\
                                                    0 & 0
                                                    \end{pmatrix}V, V^*\begin{pmatrix}
                                                    B & 0\\
                                                    0 & 0
                                                    \end{pmatrix}V\right)\\[3ex]
\ge\displaystyle F\left(\frac{1}{2} U^*\begin{pmatrix}
                                                    A & 0\\
                                                    0 & 0
                                                    \end{pmatrix}U+ \frac{1}{2} V^*\begin{pmatrix}
                                                    A & 0\\
                                                    0 & 0
                                                    \end{pmatrix}V, \,
\frac{1}{2} U^*\begin{pmatrix}
                                                    B & 0\\
                                                    0 & 0
                                                    \end{pmatrix}U+ \frac{1}{2} V^*\begin{pmatrix}
                                                    B & 0\\
                                                    0 & 0
                                                    \end{pmatrix}V\right)\\[3ex]
=F\left(\begin{pmatrix}
                          C^* AC & 0\\
                          0            & DAD
                          \end{pmatrix},\,
\begin{pmatrix}
                          C^* BC & 0\\
                          0            & DBD
                          \end{pmatrix}\right)\\[3ex]
=\displaystyle\begin{pmatrix}
                          F(C^*AC,C^*BC) & 0\\
                          0                              & F(DAD,DBD)   
                          \end{pmatrix},
\end{array}
 \]
 where, in the second equality, we used $ F(0,0)=0 $ from condition $ (iii). $ In particular, we have proved that
 \begin{equation}\label{transformer inequality}
 C^* F(A,B) C\ge F(C^*AC, C^* BC)
 \end{equation}
 for contractions $ C. $ However, the homogeneity of $ F $ then implies (\ref{transformer inequality}) for any operator $ C. $ In particular, if $ C $ is invertible we obtain
 \[
 F(A,B)\ge (C^*)^{-1} F(C^*AC, C^* BC) C^{-1}\ge F(A,B),
 \]
 hence there is equality and thus
 \[
 C^* F(A,B) C= F(C^*AC, C^* BC).
 \]
 For invertible $ A $ we therefore obtain
 \[
 A^{-1/2} F(A,B) A^{-1/2} = F(1,A^{-1/2} B A^{-1/2}).
 \] 
 If $ B $ is positive definite and of finite rank, then so is $ A^{-1/2} B A^{-1/2} $ and thus
 \[
 F(1,A^{-1/2} B A^{-1/2}) =f(A^{-1/2} B A^{-1/2})
 \] 
 by equation (\ref{formula for finite rank operators}). 
 
Let $ g $ be a continuous function defined in an open interval $ I. $ The functional calculus $ X\to g(X) $ is strongly continuous on bounded subsets of self-adjoint operators $ X $ with spectra in $ I, $ cf. the proof of 
 \cite[Lemma 2.2]{kn:bendat:1955}. Indeed, if $ (X_i) $ is a bounded net of operators converging strongly to $ X, $ then the inequality
 \[
 \begin{array}{rl}
 \|X^k\xi-X_i^k \xi\| &\le \|X^k\xi-X_i^{k-1}X\xi\|+\|X_i^{k-1}X\xi - X_i^k\xi \|\\[1.5ex]
 &\le \|X^k\xi-X_i^{k-1}X\xi\|+\|X_i^{k-1}\|\cdot \|X\xi - X_i\xi\|,
 \end{array}
 \]
together with an induction argument, shows that $ (X_i^k) $ converges strongly to $ X^k $ for any natural number $ k. $ The assertion then follows by approximating $ g $ uniformly by polynomials in a compact subset of $ I $ containing the spectrum of $ X. $ The continuity condition in $ (iii) $ therefore implies
 \[
 F(1,A^{-1/2} B A^{-1/2}) =f(A^{-1/2} B A^{-1/2})
 \] 
 and thus
 \[
 F(A,B)= A^{1/2} f(A^{-1/2} B A^{-1/2}) A^{1/2}
 \]
 for arbitrary positive definite operators $ A $ and $ B $ defined on $ \mathcal H. $
 \end{proof}

 \begin{remark}
It is crucial in the above proof that the regular map $ F(A,B) $ is defined for positive semi-definite operators. We are therefore excluding the limiting case, 
 \[
 F(A,B)=A^{1/2} f(A^{-1/2} B A^{-1/2}) A^{1/2}=AB^{-1}A,
 \]
that appears by setting $ f(t)=t^{-1} $ for $ t>0. $  
 \end{remark}
 
 Notice that the above theorem has an obvious counterpart if convexity is replaced by concavity.
 The theorem states that a non-commutative perspective function, that allows an extension to positive semi-definite operators, is the unique extension of a commutative perspective function to a homogeneous, convex and regular operator mapping. In particular, the geometric operator mean 
\[
A\#B=A^{1/2}\bigl(A^{-1/2}BA^{-1/2}\bigr)^{1/2}A^{1/2}
\] 
is the only sensible extension of the geometric mean $ (t,s)\to\sqrt{ts} $ of positive numbers to a homogeneous and concave operator mapping.

{\small
 

\vfill

\noindent Edward Effros: Mathematics Department, UCLA, Los Angeles, CA 90015.\\
Email: ege@math.ucla.edu\\[1ex]
\noindent Frank Hansen: Institute for International Education, Tohoku University, Japan.\\
Email: frank.hansen@m.tohoku.ac.jp.
      }

\end{document}